\newcommand{\fact}[1]{#1\mathpunct{}!}
\theoremstyle{plain} 
\newtheorem{thm}{Theorem}[section]
\newtheorem{lem}[thm]{Lemma}
\theoremstyle{proposition}
\newtheorem{pr}{Proposition}[section]
\theoremstyle{definition}
\newtheorem{defi}{Definition}[section]
\theoremstyle{remark}
\numberwithin{equation}{section}
\begin{document}

\hyphenpenalty=100000
\begin{center}
{\Large {\textbf{\\ Existence and uniqueness of periodic solution of  nth-order Equations with delay in Banach space having Fourier type}}}\\[5mm]
{\large \textbf{{Bahloul Rachid}$^\mathrm{{\bf \color{red}{1}}}$\footnote{\emph{{1} : E-mail address : bahloul33r@hotmail.com}} } \\[1mm]
{\footnotesize $^\mathrm{}$ 
      }\\[3mm]}

\end{center}

\begin{flushleft}\footnotesize \it \textbf{$^\mathrm{{\bf \color{red}{1}}}$ Department of Mathematics, Faculty of Sciences and Technology, {\bf Fez}, Morocco.
}\\[3mm]
\end{flushleft}

\begin{center}\textbf {ABSTRACT}\end{center} 
{\footnotesize {{The aim of this work is to study the existence of a periodic solutions of   nth-order differential equations with delay $\frac{d}{dt}x(t) + \frac{d^{2}}{dt^{2}}x(t) + \frac{d^{3}}{dt^{3}}x(t) +...+ \frac{d^{n}}{dt^{n}}x(t) = Ax(t) + L(x_{t})+ f(t)$. Our approach is based on the  M-boundedness of linear operators, Fourier type, $B^{s}_{p, q}$-multipliers and Besov spaces.}}}\\
\footnotesize{{\textbf{Keywords:}} Differential equations, Fourier type, $B^{s}_{p, q}$-multipliers.}\\[1mm]

\afterpage{
\fancyhead{} \fancyfoot{}
\fancyfoot[R]{\footnotesize\thepage}
\fancyhead[R]{\scriptsize \it{ }
 }}

\section{Introduction}

Motivated by the fact that  neutral functional integro-differential equations  with finite delay arise in many areas of applied mathematics, this type of
equations has received much attention in recent years. In particular, the problem of existence of periodic solutions, has been considered by several authors. We refer the readers to papers [{\bf \color{green}{\cite{1}}},{\bf \color{green}{\cite{5}}}, {\bf \color{green}{\cite{7}}}, {\bf \color{green}{\cite{14}}}] and the references listed therein for information on this subject.\\
In this work, we study the existence of periodic solutions for the following integro-differential equations with delay

\begin{equation}\label{e1}
\displaystyle{\sum_{j=1}^{n}\frac{d^{j}}{dt^{j}}x(t) = \frac{d}{dt}x(t) + \frac{d^{2}}{dt^{2}}x(t) + \frac{d^{3}}{dt^{3}}x(t) +...+ \frac{d^{n}}{dt^{n}}x(t) = Ax(t) + L(x_{t})+ f(t)},
\end{equation}
where  $A : D(A) \subseteq  X \rightarrow X$ are a linear closed operators on  Banach space ($X, \left\|.\right\|$)  and
$f \in L^{p}(\mathbb{T}, X)$ for all $p \geq 1$. For $r_{2 \pi} := 2\pi N$  ( some $N \in \mathbb{N}$) $L$ is in $B(L^{p}([- r_{2 \pi} ,0],\ \ X);\ \ X)$  is the space of all bounded linear operators  and  $x_{t}$ is an element of $L^{p}([- r_{2 \pi}\ \ ,0],\ \ X)$  which is defined as follows $$x_{t}(\theta) = x(t+\theta)\;\; \text{for}\;\; \theta \in[- r_{2 \pi},\ \ 0].$$
In {\bf \color{green}{\cite{7}}}, Bahloul et al established the existence of a periodic solution for the following partial functional differential equation.
$$\frac{d}{dt}[x(t) - L(x_{t})]= A[x(t)- L(x_{t})]+G(x_{t})+f(t)$$
where  $A : D(A) \subseteq  X \rightarrow X$ is a linear closed operator on  Banach space ($X, \left\|.\right\|$) and $L$ and $G$ are in $B(L^{p}([- r_{2 \pi} ,0],\ \ X);\ \ X)$.\\
In {\bf \color{green}{\cite{1}}}, Arendt gave necessary and sufficient conditions for the existence of periodic solutions of  the following evolution equation.
$$\displaystyle{\frac{d}{dt}x(t)= Ax(t)+f(t)}\;\; \text{for}\;\; t \in \mathbb{R},$$
where $A$ is a closed linear operator on an UMD-space $Y$. \\
In {\bf \color{green}{\cite{13}}}, C. Lizama established results on the existence of periodic solutions of Eq. {\bf \color{red}{\eqref{e1}}} when  $L = 0,$  namely, for the following partial functional differential equation
$$\frac{d}{dt}x(t) = Ax(t)+G(x_{t})+f(t) \;\;\text{for}\;\; t \in \mathbb{R}$$
where $(A,D(A))$ is a  linear operator on an UMD-space $X$.\\
In {\bf \color{green}{\cite{12}}}, Hernan et al, studied the existence of periodic solution for the class of linear abstract neutral functional differential equation described in the following form:

\[\frac{d}{dt}[x(t) - Bx(t - r)]= Ax(t)+G(x_{t})+f(t)\;\;\text{ for}\;\; t \in \mathbb{R}\]
where $A: D(A) \rightarrow X$ and $B : D(B) \rightarrow X$ are closed linear operator such that $D(A) \subset D(B)$ and $G \in B(L^{p}([- 2\pi,\ \ 0],\ \ X);\ \ X)$.\\
This work is organized as follows : After preliminaries in the second section,  we give a main result and the conclusion.

\section{{\LARGE \bf vector-valued space and preliminaries}}
Let $X$ be a Banach Space. Firstly, we denote By $\mathbb{T}$ the group defined as the quotient $\mathbb{R}/2 \pi \mathbb{Z}$. There is an identification between functions on $\mathbb{T}$ and $2\pi$-periodic functions on $\mathbb{R}$. We consider the interval $[0, 2\pi$) as a model for $\mathbb{T}$ .\\
Given $1 \leq p < \infty $, we denote by $L^ {p}(\mathbb{T}; X)$ the space of $2\pi$-periodic locally $p$-integrable functions from $\mathbb{R}$ into $X$, with the norm:
\[\left\|f\right\|_{p}: =\left( \int_{0}^{2\pi} \left\|f(t)\right\|^{p}dt \right)^{1/p}\]
For  $f \in L^{p}(\mathbb{T}; X)$, we denote by $\hat{f}(k)$, $k \in \mathbb{Z}$ the $k$-th Fourier coefficient of $f$ that is defined by:

\[\mathcal{F}(f)(k) = \hat{f}(k) = \frac{1}{2\pi}\int_{0}^{2\pi}e^{-ikt}f(t)dt\;\; \text{for}\;\; k \in \mathbb{Z} \ \ \text{and} \  \ t \in \mathbb{R}.\]
For $1 \leq p < \infty$, the periodic vector-valued space is defined by.\\ 
Let $\mathcal{S}(\mathbb{R})$ be the Schwartz space of all rapidly decreasing smooth functions on $\mathbb{R}$.
Let D($\mathbb{T}$) be the space of all infinitely differentiable functions on $\mathbb{T}$ equipped with the locally convex topology given by the seminorms
$||f||_{n} = \sup_{x \in \mathbb{T}}|f^{(n)}(x)|$ for $n \in \mathbb{N}$. Let $D'(\mathbb{T}; X) = \mathcal{L}(D(\mathbb{T}), X)$. In order to
define Besov spaces, we consider the dyadic-like subsets of $\mathbb{R}$:
$$I_{0} = \{t \in \mathbb{R} : |t| \leq 2 \}, I_{k} = \{t \in \mathbb{R}, 2^{k-1} < |t| \leq 2^{k+1}\}$$
for $k \in \mathbb{N}$. Let $\phi(\mathbb{R})$ be the set of all systems $\phi = (\phi_{k})_{k \in \mathbb{N}} \subset \mathcal{S}(\mathbb{R})$ satisfying $supp(\phi_{k}) \subset \bar{I}_{k}$, 
for each $k \in \mathbb{N}, \sum_{k \in \mathbb{N}} \phi_{k}(x) = 1$.\\
Let $1 \leq p, q \leq \infty, s \in \mathbb{R}$ and $(\phi_{j})_{j \geq 0} \in \phi(\mathbb{R})$  the X-valued periodic Besov space is defined by
$$B_{p,q}^{s}(\mathbb{T}; X) = \{f \in D'(\mathbb{T}; X): ||f||_{B_{p,q}^{s}} := ( \sum_{j \geq 0}2^{sjq}||\sum_{k \in \mathbb{Z}}e_{k} \phi_{j}(k) \hat{f}(k)||^{q}_{p} )^{1/q} < \infty \}.$$

\begin{pr}\cite{14}\\
1) $B_{p,q}^{s}((0, 2\pi); X)$ is a Banach space;\\
2) Let $s > 0$. Then $f \in B_{p,q}^{s+1}((0, 2\pi); X)$ in and only if $f$ is differentiale  and $f' \in B_{p,q}^{s}((0, 2\pi); X)$ \\
3) Let $s > 0$. Then $f \in B_{p,q}^{s+j}((0, 2\pi); X)$ in and only if $f$ is differentiale four times and $f^{(j)} \in B_{p,q}^{s}((0, 2\pi); X)$ for all $j \in \mathbb{N}$
\end{pr}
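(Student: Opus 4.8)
The plan is to handle the three parts in order, with (2) and (3) resting on a single Bernstein-type estimate for the dyadic blocks. Throughout I write $\Delta_j f := \sum_{k\in\mathbb Z} e_k\,\phi_j(k)\,\hat f(k)$ for the $j$-th block, so that $\|f\|_{B^s_{p,q}}=\big(\sum_{j\ge0}2^{sjq}\|\Delta_j f\|_p^q\big)^{1/q}$ is literally the weighted $\ell^q$-norm (weights $2^{sj}$) of the sequence $(\|\Delta_j f\|_p)_{j\ge0}$ of $L^p(\mathbb T;X)$-norms; this mixed-norm viewpoint drives everything.

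For part (1) I would transfer the norm axioms from the mixed norm. Homogeneity is immediate; the triangle inequality follows from Minkowski in $L^p(\mathbb T;X)$ applied blockwise followed by Minkowski in $\ell^q$ against the weights. For definiteness, if $\|f\|_{B^s_{p,q}}=0$ then $\Delta_j f=0$ in $L^p$ for every $j$, and since $\sum_j\phi_j\equiv1$ recovers each $\hat f(k)$ from the finitely many blocks whose support contains $k$, one gets $\hat f(k)=0$ for all $k$, hence $f=0$ in $D'(\mathbb T;X)$. For completeness, take a Cauchy sequence $(f_n)$: for each fixed $j$ the weight $2^{sj}$ is constant, so $(\Delta_j f_n)_n$ is Cauchy in the Banach space $L^p(\mathbb T;X)$ and converges to some $g_j$; one checks the $g_j$ are the blocks of a single distribution $f$, and a Fatou argument on the $\ell^q$ sum gives both $f\in B^s_{p,q}$ and $\|f_n-f\|_{B^s_{p,q}}\to0$.

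The heart of (2) and (3) is the blockwise equivalence, for $j\ge1$,
$$c\,2^{j}\|\Delta_j f\|_p \le \|\Delta_j f'\|_p \le C\,2^{j}\|\Delta_j f\|_p,$$
with $c,C$ independent of $j$ and $f$. Since $\widehat{f'}(k)=ik\,\hat f(k)$, the block $\Delta_j f'$ arises from $\Delta_j f$ by the Fourier multiplier $k\mapsto ik$, which on $\mathrm{supp}\,\phi_j$ I factor as $ik=2^{j}\sigma(k/2^{j})$, choosing $\sigma\in\mathcal S(\mathbb R)$ with $\sigma(\xi)=i\xi$ on $\tfrac12\le|\xi|\le2$ and $\mathrm{supp}\,\sigma\subset\{\tfrac14\le|\xi|\le4\}$; the upper bound then reads $\|\Delta_j f'\|_p\le 2^{j}\|\sigma(\cdot/2^{j})\|_{M_p}\|\Delta_j f\|_p$, and the lower bound follows symmetrically with $\tau\in\mathcal S$ equal to $1/(i\xi)$ on $\tfrac12\le|\xi|\le2$. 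Summing $\|\Delta_j f'\|_p^q$ against the weights $2^{sjq}$ and absorbing the factor $2^{jq}$ into the weight converts $\|f'\|_{B^s_{p,q}}^q$ into the tail $\sum_{j\ge1}2^{(s+1)jq}\|\Delta_j f\|_p^q$; the missing $j=0$ block involves only the finitely many frequencies $|k|\le2$, on which differentiation and its inverse are bounded both ways, the sole exception being the constant $\hat f(0)$, which lies in every Besov space. This yields $\|f\|_{B^{s+1}_{p,q}}\approx \|\hat f(0)\|+\|f'\|_{B^s_{p,q}}$ and hence the biconditional of (2); part (3) follows by iterating (2) exactly $j$ times.

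The main obstacle is the uniform-in-$j$ multiplier bound $\sup_j\|\sigma(\cdot/2^{j})\|_{M_p}<\infty$, and the point I would stress is that for an \emph{arbitrary} Banach space $X$ this requires no UMD or $R$-boundedness hypothesis. The periodic multiplier with symbol $\sigma(k/2^{j})$ acts by convolution with the $2\pi$-periodization of the dilate $2^{j}\check\sigma(2^{j}\cdot)$, whose $L^1(\mathbb T)$-norm is dominated by $\|\check\sigma\|_{L^1(\mathbb R)}$ uniformly in $j$ by a transference/Poisson-summation estimate, so the $L^p(\mathbb T;X)$ operator norm is bounded by that same constant. This $L^1$-kernel mechanism — unavailable for full $L^p$-multiplier theorems but automatic here because the symbols are compactly supported smooth dilates — is exactly what makes the Bernstein equivalence valid on every Banach space, and it is where I would concentrate the technical care.
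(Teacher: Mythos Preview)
The paper does not prove this proposition: it is stated with a citation to \cite{14} (Keyantuo--Lizama--Poblete) and no argument is supplied, so there is nothing in the paper to compare your attempt against. What you have written is precisely the standard proof one finds in that literature: the mixed $\ell^q(L^p)$ reading of the norm for part~(1), and the dyadic Bernstein equivalence $\|\Delta_j f'\|_p\asymp 2^j\|\Delta_j f\|_p$ for parts~(2)--(3), obtained by writing the symbol $ik$ on $\mathrm{supp}\,\phi_j$ as $2^j\sigma(k/2^j)$ with $\sigma$ a fixed Schwartz function. Your emphasis that this works for an arbitrary Banach space $X$, because the relevant multipliers act by convolution with scalar kernels of uniformly bounded $L^1(\mathbb T)$-norm (hence no UMD or $R$-boundedness hypothesis is needed), is exactly the right technical point and is what underlies the vector-valued theory in \cite{1,14}.

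At the level of detail you give, the argument is sound. Two items are left implicit but are routine: in the completeness proof, the passage from $\Delta_j f_n\to g_j$ in $L^p$ to the existence of a single $f\in D'(\mathbb T;X)$ with $\Delta_j f=g_j$ uses that each frequency $k$ lies in the support of only finitely many $\phi_j$, so $\hat f(k)$ is unambiguously recovered; and in part~(2), ``differentiable'' should be read distributionally---your Bernstein estimate shows the distributional derivative lies in $B^s_{p,q}$, and since $s>0$ forces $B^s_{p,q}\hookrightarrow L^p$, one then gets absolute continuity and an a.e.\ derivative, which is how the proposition is used later in the paper.
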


\begin{defi}\cite{14}\\
For $1 \leq p < \infty$ , a sequence $\left\{M_{k}\right\}_{k \in Z} \subset B(X,Y)$ is a $B_{p,q}^{s}$-multiplier if for each $f \in B_{p,q}^{s}(\mathbb{T}, X),$ there exists  $u \in B_{p,q}^{s}(\mathbb{T},Y)$ such that $\hat{u}(k) = M_{k}\hat{f}(k)$ for all $k \in \mathbb{Z}$.
\end{defi}

\begin{defi}\cite{1}\\
The Banach space X has Fourier type $r \in ]1, 2]$ if there exists $C_{r} > 0$ such that
$$||\mathcal{F}(f)||_{r'} \leq C_{r}||f||_{r}, \  f \in L^{r}(\mathbb{R}, X)$$
where $\frac{1}{r'} + \frac{1}{r} = 1$.
\end{defi}

\begin{defi}\cite{14} \\
Let $\left\{M_{k}\right\}_{k \in Z} \subseteq B(X,Y)$  be a sequence of operators. $\left\{M_{k}\right\}_{k \in Z}$  is M-bounded of order 1( or M-bounded) if 
\begin{equation}\label{M}
\sup_{k}\|M_{k}\| < \infty \  \text{and}  \   \sup_{k}\|k(M_{k+1}-M_{k})\| < \infty
\end{equation}
\end{defi}

\begin{thm}\label{t41} \cite{1}\\
Let X and Y be Banach spaces having Fourier type $r \in ]1, 2]$ and let $\left\{M_{k}\right\}_{k \in Z} \subseteq B(X,Y)$ be a sequence satisfying (\ref{M}).
Then for $1 \leq p,q < \infty, s \in \mathbb{R}, \left\{M_{k}\right\}_{k \in Z}$ is an  $B_{p,q}^{s}$-multiplier.
\end{thm}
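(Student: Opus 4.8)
The plan is to reduce the statement to a uniform estimate on Littlewood--Paley blocks and to pass to the limit by density. Since $1\le p,q<\infty$, the trigonometric polynomials are dense in $B^{s}_{p,q}(\mathbb{T};X)$, so it suffices to prove a bound $\|u\|_{B^{s}_{p,q}(\mathbb{T};Y)}\le C\|f\|_{B^{s}_{p,q}(\mathbb{T};X)}$ for every $f$ with finite Fourier support, where $u$ is defined by $\hat{u}(k)=M_{k}\hat{f}(k)$ (a finite sum, hence unambiguous and itself a $Y$-valued polynomial). The constant $C$ must depend only on the $M$-bounds, collected as $C_{M}:=\sup_{k}\|M_{k}\|+\sup_{k}\|k(M_{k+1}-M_{k})\|$, and on the Fourier-type constant $C_{r}$.

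First I would unwind the Besov norm into its blocks. Writing $u_{j}:=\sum_{k\in\mathbb{Z}}e_{k}\phi_{j}(k)M_{k}\hat{f}(k)$, we have $\|u\|_{B^{s}_{p,q}}^{q}=\sum_{j\ge 0}2^{sjq}\|u_{j}\|_{p}^{q}$. Because $\operatorname{supp}\phi_{j}\subseteq\bar{I}_{j}$, the block $u_{j}$ involves only the operators $M_{k}$ with $2^{j-1}\le|k|\le 2^{j+1}$. Choosing auxiliary cut-offs $\widetilde{\phi}_{j}\equiv 1$ on $\bar{I}_{j}$ and supported in $\bar{I}_{j-1}\cup\bar{I}_{j}\cup\bar{I}_{j+1}$, one gets $u_{j}=\sum_{k}e_{k}\phi_{j}(k)M_{k}\widehat{f_{j}^{\ast}}(k)$ with $f_{j}^{\ast}:=\sum_{k}e_{k}\widetilde{\phi}_{j}(k)\hat{f}(k)$. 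Since the enlarged annuli overlap only finitely, $\sum_{j}2^{sjq}\|f_{j}^{\ast}\|_{p}^{q}\le C\|f\|_{B^{s}_{p,q}}^{q}$, so the theorem follows once we establish the block estimate $\|u_{j}\|_{p}\le C\|f_{j}^{\ast}\|_{p}$ with $C$ independent of $j$.

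The heart of the matter is this single-block estimate, realised as a convolution. Setting $K_{j}(t):=\sum_{k}e_{k}(t)\phi_{j}(k)M_{k}\in B(X,Y)$, one has $u_{j}=\tfrac{1}{2\pi}K_{j}\ast f_{j}^{\ast}$, and I would prove $\|K_{j}\ast g\|_{L^{p}(\mathbb{T};Y)}\le C\,C_{M}\,\|g\|_{L^{p}(\mathbb{T};X)}$ uniformly in $j$. Here the Fourier type enters decisively. A naïve bound would require $K_{j}\in L^{1}(\mathbb{T};B(X,Y))$, which is unavailable for general Banach spaces; instead I would apply the periodic form of the Fourier-type inequality to the $X$-valued and $Y$-valued functions only, never to $B(X,Y)$. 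Concretely, summation by parts replaces $\phi_{j}(k)M_{k}$ by the telescoped differences $k(M_{k+1}-M_{k})$, bounded in norm by $C_{M}$, modulated by the discrete derivatives of $\phi_{j}$, which on $I_{j}$ have size $\sim 2^{-j}$ against the $\sim 2^{j}$ active frequencies. Converting the resulting sum through the synthesis estimate $\|\sum_{k}e_{k}c_{k}\|_{r'}\le C_{r}(\sum_{k}\|c_{k}\|^{r})^{1/r}$ in $Y$ and its analysis dual $(\sum_{k}\|\hat{g}(k)\|^{r'})^{1/r'}\le C_{r}\|g\|_{r}$ in $X$, and interpolating with the trivial boundedness estimates, yields the claimed $L^{p}$ bound with a constant independent of $j$.

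The main obstacle is exactly this last step. Because $B(X,Y)$ inherits no useful Fourier type from $X$ and $Y$, the operator-valued kernel cannot be estimated through Hausdorff--Young, and one must carry the input coefficients in $X$ and the output in $Y$ separately, letting each $M_{k}$ act only through its scalar norm and through $\|k(M_{k+1}-M_{k})\|\le C_{M}$. The uniformity in $j$ is the delicate point: block $I_{j}$ contains $\sim 2^{j}$ frequencies, but order-one $M$-boundedness forces the total variation $\sum_{k\in I_{j}}\|M_{k+1}-M_{k}\|$ to stay bounded by $C_{M}$ independently of $j$. The precise reason a single difference suffices is that $r>1$ gives $1/r<1$, so the Fourier-type conversion loses less than one full power of the block length; for a general Banach space ($r=1$) this margin disappears and one is forced to use second-order differences, i.e. $M$-boundedness of order $2$. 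Once the uniform block bound $\|u_{j}\|_{p}\le C\|f_{j}^{\ast}\|_{p}$ is in hand, summing the $q$-th powers against the weights $2^{sjq}$ and passing from trigonometric polynomials to all of $B^{s}_{p,q}$ by density completes the proof.
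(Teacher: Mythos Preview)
The paper does not prove this theorem. Theorem~\ref{t41} is quoted, with citation to \cite{1} (Arendt--Bu), as an external tool; no argument for it appears anywhere in the paper. So there is no ``paper's own proof'' to compare your proposal against.

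For what it is worth, your outline is broadly the strategy of the original Arendt--Bu proof: reduce the Besov estimate to a uniform $L^{p}$ bound on dyadic blocks, represent each block as a convolution with an operator-valued kernel supported on $\bar{I}_{j}$, use summation by parts to trade $M_{k}$ for the first differences $k(M_{k+1}-M_{k})$, and then exploit the Fourier type $r>1$ of $X$ and $Y$ (never of $B(X,Y)$) to convert the $\sim 2^{j}$ frequencies into a bound independent of $j$. Your remark that $r=1$ would force second-order differences is exactly the distinction Arendt--Bu draw between general Banach spaces (order-$2$ $M$-boundedness required) and spaces with nontrivial Fourier type (order $1$ suffices). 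The one place your sketch is genuinely thin is the passage ``interpolating with the trivial boundedness estimates'' to reach all $1\le p<\infty$: you have not said what the endpoint estimates are or why the Fourier-type inequality, which a priori lives at the exponent pair $(r,r')$, propagates to arbitrary $p$. In the actual proof this is handled by showing the kernel lies in $L^{1}(\mathbb{T};B(X,Y))$ with uniformly bounded norm (the summation-by-parts plus Fourier-type step yields exactly this), after which Young's inequality gives all $p$ at once; you should make that mechanism explicit rather than invoking an unspecified interpolation.
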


\begin{lem} \cite{7} \\ 
Let $L: L^{p}(\mathbb{T}, X) \rightarrow  X$  be a bounded linear operateur. Then
$$\widehat{L(u_{\textbf{.}})}(k) = L(e_{k}\hat{u}(k)):=L_{k}\hat{u}(k)\;\;  \text{for all}\;\; k \in \mathbb{Z}$$
and $\{ L_{k} \}_{k \in \mathbb{Z}}$ is r-bounded such that 
\begin{center}
$R_{p}((L_{k})_{k \in \mathbb{Z}})\leq (2r_{2\pi})^{1/p} \left\|L\right\|$.
\end{center}
\end{lem}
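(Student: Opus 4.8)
The plan is to treat the two assertions separately: first the closed formula for the Fourier coefficients of $t \mapsto L(u_t)$, and then the $R$-boundedness estimate for the resulting family $\{L_k\}$.

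For the Fourier coefficient, I would start from the definition $\widehat{L(u_\cdot)}(k) = \frac{1}{2\pi}\int_0^{2\pi} e^{-ikt} L(u_t)\,dt$ and use that $L$ is bounded and linear (together with the continuity, hence Bochner integrability, of $t \mapsto u_t$ into $L^p([-r_{2\pi},0];X)$) to pull it out of the integral, obtaining $\widehat{L(u_\cdot)}(k) = L\big(\frac{1}{2\pi}\int_0^{2\pi} e^{-ikt} u_t\,dt\big)$, where the integral is taken in $L^p([-r_{2\pi},0];X)$. Evaluating the integrand at $\theta \in [-r_{2\pi},0]$ gives $\frac{1}{2\pi}\int_0^{2\pi} e^{-ikt} u(t+\theta)\,dt$; the substitution $s = t+\theta$ turns this into $e^{ik\theta}\,\frac{1}{2\pi}\int_\theta^{2\pi+\theta} e^{-iks} u(s)\,ds$, and since $s\mapsto e^{-iks}u(s)$ is $2\pi$-periodic the integral over any interval of length $2\pi$ equals $\hat u(k)$. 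Hence $\frac{1}{2\pi}\int_0^{2\pi} e^{-ikt}u_t\,dt = e_k\hat u(k)$ as an element of $L^p([-r_{2\pi},0];X)$, and applying $L$ yields $\widehat{L(u_\cdot)}(k) = L(e_k\hat u(k)) = L_k\hat u(k)$, with $L_k x := L(e_k x)$.

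For the $R$-boundedness, I would fix finite families $(k_j)$ in $\mathbb{Z}$ and $(x_j)$ in $X$ and estimate the Rademacher average $\int_0^1\|\sum_j r_j(s) L_{k_j}x_j\|^p\,ds$. Using linearity of $L$ to write $\sum_j r_j(s)L_{k_j}x_j = L\big(\sum_j r_j(s) e_{k_j}x_j\big)$ and then the operator norm of $L$, this is bounded by $\|L\|^p\int_0^1\|\sum_j r_j(s) e_{k_j}x_j\|_{L^p([-r_{2\pi},0];X)}^p\,ds$. Writing out the $L^p$-norm over $[-r_{2\pi},0]$ and applying Fubini's theorem converts this into $\|L\|^p\int_{-r_{2\pi}}^0\big(\int_0^1\|\sum_j r_j(s) e^{ik_j\theta}x_j\|_X^p\,ds\big)\,d\theta$. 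It remains to compare the inner Rademacher average, in which each vector carries a unimodular factor $e^{ik_j\theta}$, with $\int_0^1\|\sum_j r_j(s)x_j\|^p\,ds$.

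The main obstacle is exactly this last comparison, i.e. extracting the precise constant $(2r_{2\pi})^{1/p}$. I would handle the phase factors $e^{ik_j\theta}$ by splitting them into their real and imaginary parts $\cos(k_j\theta)$ and $\sin(k_j\theta)$ and invoking the contraction principle: each of these real multipliers has modulus at most $1$, so it does not increase the Rademacher average. The passage through the two parts is what produces the factor $2$, while the $d\theta$-integration over the interval of length $r_{2\pi}$ produces the factor $r_{2\pi}$; combining these and taking $p$-th roots gives $R_p((L_k)) \le (2r_{2\pi})^{1/p}\|L\|$. I expect the delicate point to be bookkeeping the constant sharply rather than any conceptual difficulty: for $p>1$ a crude triangle-inequality split of the real and imaginary parts is lossy, and one should exploit that the unimodular factors leave the Rademacher average essentially unchanged (as is transparent in the Hilbertian case $p=2$) to reach the asserted constant. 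The two structural ingredients, pulling $L$ through the integral and applying Fubini, are by contrast routine.
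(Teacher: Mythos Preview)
The paper does not prove this lemma at all; it is quoted from reference~\cite{7} and stated without proof, so there is no argument in the paper to compare against. Your approach is the standard one and is correct: interchanging $L$ with the Bochner integral and using periodicity gives the Fourier-coefficient identity, and pulling $L$ out of the Rademacher sum, applying Fubini over $[-r_{2\pi},0]$, and then invoking the Kahane contraction principle to remove the unimodular factors $e^{ik_j\theta}$ gives the $R$-bound. Your candid remark about the constant is well placed: the triangle-inequality split into real and imaginary parts followed by the real contraction principle produces the factor $2$ \emph{outside} the $p$-th root, yielding $2\,r_{2\pi}^{1/p}\|L\|$ rather than $(2r_{2\pi})^{1/p}\|L\|$; the discrepancy, if any, is in the constant recorded in~\cite{7}, not in your method.
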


\section{Main result}
For convenience, we introduce the following notations: \\
$a_{k} = n+ \sum_{j=1}^{n-1}\sum_{p=1}^{j}C_{j}^{p}(ik)^{j+1-n-p}i^{p-1} + \sum_{p=2}^{n}C_{n}^{p}(ik)^{1-p}i^{p-1}, C_{n}^{p} = \frac{\fact{p}\fact{(n-p)}}{\fact{n}}$\\
$b_{k} = 2\sum_{p=1}^{n}C_{n}^{p}(ik)^{-p}i^{p} + \sum_{j=1}^{n-1}(ik)^{j-n}+ \sum_{j=1}^{n-1}\sum_{p=0}^{j}C_{j}^{p}(ik)^{j-p-n}i^{p}+ \sum_{p=1}^{n}C_{n}^{p}(ik)^{-p}i^{p}\sum_{j=1}^{n-1}(ik)^{j-n}$\\
$c_{k} = \sum_{p=0}^{n}C_{n}^{p}(ik)^{n-p}i^{p}\sum_{j=1}^{n}(ik)^{j}- (ik)^{n}\sum_{j=1}^{n}\sum_{p=0}^{j}C_{j}^{p}(ik)^{j-p}i^{p}$\\
$L_{k}(x):= L(e_{k}x)$ and $e_{k}(\theta):=e^{ik\theta}$, for all $k \in \mathbb{Z}$ and suppose that $\left\{L_{k}\right\}_{k \in Z}$  is M-bounded.
\begin{defi} :
Let $1 \leq p,q < \infty$ and $s > 0$. We say that a function $x \in  B_{p,q}^{s}(\mathbb{T}; X)$ is a strong  $B_{p,q}^{s}$-solution of (\ref{e1}) if $x(t)  \in D(A), x(t) \in B_{p,q}^{s+j}(\mathbb{T}; X), \ j \in \{1,...n\}$ and equation (\ref{e1}) holds for a.e $t \in \mathbb{T}$.
\end{defi}
We prove the following result.

\begin{lem}\label{l1}:
Let X be a Banach space and A be a  linear closed and bounded operator. Suppose that $(\sum_{j=1}^{n}(ik)^{j}I - A - L_{k})$ is bounded invertible and $(ik)^{n}(\sum_{j=1}^{n}(ik)^{j}I - A - L_{k})^{-1}$ is bounded. 
Then  \\ $\left\{N_{k} = (\sum_{j=1}^{n}(ik)^{j}I - A - L_{k})^{-1}\right\}_{k \in Z}, \left\{S_{k} = (ik)^{n}N_{k}\right\}_{k \in Z}$ and $\left\{T_{k} = L_{k}N_{k}\right\}_{k \in Z}$ are M-bounded.
\end{lem}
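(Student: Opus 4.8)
The plan is to verify the two defining conditions of M-boundedness in \eqref{M} for each of the three sequences, drawing everything from two inputs. The first is the decay estimate $\|N_k\| \le C|k|^{-n}$ for $k \neq 0$, which is precisely a reformulation of the hypothesis that $S_k = (ik)^{n}N_k$ is bounded (since $\|S_k\| = |k|^{n}\|N_k\|$). The second is the M-boundedness of $\{L_k\}$, which yields $\sup_k \|L_k\| < \infty$ and $\sup_k \|k(L_{k+1}-L_k)\| < \infty$. Writing $P_k := \sum_{j=1}^{n}(ik)^{j}I - A - L_k$ (a bounded operator, since $A$ and $L_k$ are bounded), we have $N_k = P_k^{-1}$, boundedly invertible by hypothesis. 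The first M-boundedness condition for $\{N_k\}$ is then immediate: $\|N_k\| \le C|k|^{-n} \le C$ for $|k| \ge 1$, while $N_0$ is a single bounded operator.

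The core of the proof is the estimate $\sup_k \|k(N_{k+1}-N_k)\| < \infty$. First I would use the resolvent-type identity
$$N_{k+1} - N_k = N_{k+1}(P_k - P_{k+1})N_k,$$
valid because $P_k N_k = I = P_{k+1}N_{k+1}$. Next I would note that in $P_k - P_{k+1}$ the operator $A$ cancels, leaving the scalar-times-identity term $\sum_{j=1}^{n}\big[(ik)^{j}-(i(k+1))^{j}\big]I$ together with $-(L_k - L_{k+1})$. Since $(i(k+1))^{j}-(ik)^{j}$ is a polynomial in $k$ of degree $j-1$, the scalar part has degree at most $n-1$, so that $\|k(P_k-P_{k+1})\| \lesssim |k|^{n} + |k|\,\|L_{k+1}-L_k\| \lesssim |k|^{n}$, the last term staying bounded by the M-boundedness of $\{L_k\}$. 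Inserting $\|N_k\|, \|N_{k+1}\| \lesssim |k|^{-n}$ into the identity gives $\|k(N_{k+1}-N_k)\| \lesssim |k|^{-n}$, which is bounded; this completes the M-boundedness of $\{N_k\}$.

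The remaining two sequences follow by algebraic splitting. For $S_k$ the first condition is the hypothesis itself, while for the difference I would write
$$k(S_{k+1}-S_k) = (i(k+1))^{n}\,k(N_{k+1}-N_k) + k\big[(i(k+1))^{n}-(ik)^{n}\big]N_k,$$
and bound the first summand by $|k+1|^{n}\cdot|k|^{-n} \lesssim 1$ and the second by $|k|\cdot|k|^{n-1}\cdot|k|^{-n} \lesssim 1$, again using that the power difference has degree $n-1$. For $T_k$, boundedness follows from $\|T_k\| \le \|L_k\|\,\|N_k\| \lesssim |k|^{-n}$, and for the difference I would split
$$k(T_{k+1}-T_k) = L_{k+1}\,k(N_{k+1}-N_k) + k(L_{k+1}-L_k)N_k,$$
whose terms are controlled by $\sup_k\|L_k\|\cdot|k|^{-n}$ and $\sup_k\|k(L_{k+1}-L_k)\|\cdot|k|^{-n}$ respectively.

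The main obstacle is the difference estimate for $\{N_k\}$: it succeeds only because the factor $|k|^{n}$ produced by $P_k - P_{k+1}$ is exactly absorbed by the decay $\|N_k\|\lesssim|k|^{-n}$ coming from the boundedness of $S_k$, so that the two competing growth rates cancel to leave a bounded quantity. One must also check that the finitely many small indices---in particular $k=0$, where $(ik)^{j}$ vanishes and the decay bound on $N_k$ is unavailable---contribute only finitely many bounded terms and hence do not affect any of the suprema.
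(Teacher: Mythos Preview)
Your proof is correct and shares the same skeleton as the paper's: both hinge on the resolvent identity $N_{k+1}-N_k = N_{k+1}(P_k - P_{k+1})N_k$, the cancellation of $A$ in the difference $P_k - P_{k+1}$, and the standing M-boundedness of $\{L_k\}$. The execution, however, differs. The paper carries out explicit binomial expansions of $(i(k+1))^{j}$ and packages the resulting coefficients into auxiliary scalars $a_k, b_k, c_k$, for which it separately establishes boundedness of $\{k b_k\}$ and the identity $c_k = (ik)^{2n}b_k$; in particular, for $\{S_k\}$ it does \emph{not} reuse the $N_k$-estimate but rewrites $k(S_{k+1}-S_k)$ as $kN_{k+1}[\cdots]N_k$ from scratch and reduces it, via the $b_k$--$c_k$ machinery, to terms involving $S_{k+1}$, $S_k$ and the bounded operator $A+L_k$. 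Your route is more conceptual: you extract the sharper decay $\|k(N_{k+1}-N_k)\| \lesssim |k|^{-n}$ directly from growth-order bookkeeping, and then the simple splitting $k(S_{k+1}-S_k) = (i(k+1))^{n}\,k(N_{k+1}-N_k) + k\big[(i(k+1))^{n}-(ik)^{n}\big]N_k$ finishes the $S_k$-case in two lines. This avoids the $a_k,b_k,c_k$ computation altogether and, as a bonus, never invokes the boundedness of $A$ (which the paper does use in its $S_k$-step through the factor $(A+L_k)$). The paper's argument has the merit of being fully explicit; yours is shorter and slightly more general.
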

\begin{proof} The proof is given by several steps.\\
{\bf Step 1}: We claim that $\{kb_{k}\}_{k \in \mathbb{Z}}$ is bounded.\\
 We have
\begin{align*}
&kb_{k} = 2\sum_{p=1}^{n}C_{n}^{p}(ik)^{-p}i^{p} + \sum_{j=1}^{n-1}(ik)^{j-n}+ \sum_{j=1}^{n-1}\sum_{p=0}^{j}C_{j}^{p}(ik)^{j-p-n}i^{p} + \sum_{p=1}^{n}C_{n}^{p}(ik)^{-p}i^{p}\sum_{j=1}^{n-1}(ik)^{j-n}\\
&=2\sum_{p=1}^{n}C_{n}^{p}(ik)^{1-p}i^{p-1} - i\sum_{j=1}^{n-1}(ik)^{1+j-n} + \sum_{j=1}^{n-1}\sum_{p=0}^{j}C_{j}^{p}(ik)^{1+j-p-n}i^{p-1} - i\sum_{p=1}^{n}C_{n}^{p}(ik)^{-p}i^{p}\sum_{j=1}^{n-1}(ik)^{1+j-n}
\end{align*}
is bonded because:
\begin{align*}
&1 < p  \  \  \Rightarrow \  \ \sum_{p=1}^{n}C_{n}^{p}(ik)^{1-p}i^{p-1} \ \text{is bounded} \\
&1  + j< n \  \  \Rightarrow \  \  \sum_{j=1}^{n-1}(ik)^{1+j-n} \ \text{is bounded} \\
&1  + j< n \ \text{and} \ p \geq 0 \  \  \Rightarrow \  \  \sum_{j=1}^{n-1}\sum_{p=0}^{j}C_{j}^{p}(ik)^{1+j-p-n}i^{p-1} \ \text{is bounded} \\
&1  + j< n \ \text{and} \ p \geq 1 \  \  \Rightarrow \  \  \sum_{j=1}^{n-1}\sum_{p=0}^{j}C_{j}^{p}(ik)^{1+j-p-n}i^{p-1} \ \text{is bounded}
\end{align*}
{\bf Step 2}: New, we claim that, for all $k \in \mathbb{Z}$
\begin{equation}\label{c}
c_{k} = (ik)^{2n}b_{k} 
\end{equation}
\begin{align*}
&c_{k} = \sum_{p=0}^{n}C_{n}^{p}(ik)^{n-p}i^{p}\sum_{j=1}^{n}(ik)^{j}- (ik)^{n}\sum_{j=1}^{n}\sum_{p=0}^{j}C_{j}^{p}(ik)^{j-p}i^{p}\\
&= [ (ik)^{n} + \sum_{p=1}^{n}C_{n}^{p}(ik)^{n-p}i^{p} ][ \sum_{j=1}^{n-1}(ik)^{j} + (ik)^{n}] - (ik)^{n}[ \sum_{j=1}^{n-1}\sum_{p=0}^{j}C_{j}^{p}(ik)^{j-p}i^{p} + \sum_{p=0}^{n}C_{n}^{p}(ik)^{n-p}i^{p} ]\\
&= [ (ik)^{n} + \sum_{p=1}^{n}C_{n}^{p}(ik)^{n-p}i^{p} ][ \sum_{j=1}^{n-1}(ik)^{j} + (ik)^{n}] - (ik)^{n}[ \sum_{j=1}^{n-1}\sum_{p=0}^{j}C_{j}^{p}(ik)^{j-p}i^{p} + \sum_{p=1}^{n}C_{n}^{p}(ik)^{n-p}i^{p} + (ik)^{n}]\\
&=(ik)^{n}[\sum_{p=1}^{n}C_{n}^{p}(ik)^{n-p}i^{p} + \sum_{j=1}^{n-1}(ik)^{j}+ \sum_{j=1}^{n-1}\sum_{p=0}^{j}C_{j}^{p}(ik)^{j-p}i^{p} + \sum_{p=1}^{n}C_{n}^{p}(ik)^{n-p}i^{p}] + (\sum_{p=1}^{n}C_{n}^{p}(ik)^{n-p}i^{p})(\sum_{j=1}^{n-1}(ik)^{j})\\
&=(ik)^{2n}[2\sum_{p=1}^{n}C_{n}^{p}(ik)^{-p}i^{p} + \sum_{j=1}^{n-1}(ik)^{j-n}+ \sum_{j=1}^{n-1}\sum_{p=0}^{j}C_{j}^{p}(ik)^{j-p-n}i^{p}] + (ik)^{n}(\sum_{p=1}^{n}C_{n}^{p}(ik)^{-p}i^{p})(\sum_{j=1}^{n-1}(ik)^{j})\\
&=(ik)^{2n}[2\sum_{p=1}^{n}C_{n}^{p}(ik)^{-p}i^{p} + \sum_{j=1}^{n-1}(ik)^{j-n}+ \sum_{j=1}^{n-1}\sum_{p=0}^{j}C_{j}^{p}(ik)^{j-p-n}i^{p} ] + (ik)^{2n}(\sum_{p=1}^{n}C_{n}^{p}(ik)^{-p}i^{p})(\sum_{j=1}^{n-1}(ik)^{j-n})\\
&=(ik)^{2n}[2\sum_{p=1}^{n}C_{n}^{p}(ik)^{-p}i^{p} + \sum_{j=1}^{n-1}(ik)^{j-n}+ \sum_{j=1}^{n-1}\sum_{p=0}^{j}C_{j}^{p}(ik)^{j-p-n}i^{p}  + \sum_{p=1}^{n}C_{n}^{p}(ik)^{-p}i^{p}\sum_{j=1}^{n-1}(ik)^{j-n} ]\\
&=(ik)^{2n}b_{k}.
\end{align*}
{\bf Step 3}: We claim that 

\begin{eqnarray}\label{b1}
\left\{
\begin{array}{ccccc}
\sup_{k}\|k(N_{k+1} - N_{k})\| < \infty,\\\\
\sup_{k}\|k(S_{k+1} - S_{k})\| < \infty,\\\\
\sup_{k}\|k(T_{k+1} - T_{k})\| < \infty
\end{array}
\right.
\end{eqnarray}
By hypothesis we have, $\{ N_{k}\}_{k \in \mathbb{Z}}$ and $\{ S_{k}\}_{k \in \mathbb{Z}}$ are bounded. Then We have
\begin{align*}
&\sup_{k \in \mathbb{Z}}\| k (N_{k+1} - N_{k})\|=\sup_{k \in \mathbb{Z}}\left\|k N_{k+1}[(\sum_{j=1}^{n}(ik)^{j}I-A- L_{k}) -  ( \sum_{j=1}^{n}(i(k+1))^{j}I-A- L_{k+1})]N_{k}\right\|\\
&=\sup_{k \in \mathbb{Z}}\left\|k N_{k+1}[(\sum_{j=1}^{n}(ik)^{j}I-A- L_{k}) -  ( \sum_{j=1}^{n}\sum_{p=0}^{j}C_{j}^{p}(ik)^{j-p}i^{p}I-A- L_{k+1})]N_{k}\right\|\\
&=\sup_{k \in \mathbb{Z}}\left\| - kN_{k+1}[\sum_{j=1}^{n-1}\sum_{p=1}^{j}C_{j}^{p}(ik)^{j-p}i^{p} + \sum_{p=1}^{n}C_{n}^{p}(ik)^{n-p}i^{p}]N_{k} + N_{k+1}k(L_{k} -  L_{k+1})N_{k}\right\|\\ 
&=\sup_{k \in \mathbb{Z}}\left\| - kN_{k+1}[\sum_{j=1}^{n-1}\sum_{p=1}^{j}C_{j}^{p}(ik)^{j-p}i^{p} + \sum_{p=2}^{n}C_{n}^{p}(ik)^{n-p}i^{p}+in(ik)^{n-1}]N_{k} + N_{k+1}k(L_{k} -  L_{k+1})N_{k}\right\|\\ 
&=\sup_{k \in \mathbb{Z}}\left\| - kN_{k+1}[n+ \frac{1}{i(ik)^{n-1}}\sum_{j=1}^{n-1}\sum_{p=1}^{j}C_{j}^{p}(ik)^{j-p}i^{p} + \frac{1}{i(ik)^{n-1}}\sum_{p=2}^{n}C_{n}^{p}(ik)^{n-p}i^{p}]i(ik)^{n-1}N_{k} + N_{k+1}k(L_{k} -  L_{k+1})N_{k}\right\|\\ 
&=\sup_{k \in \mathbb{Z}}\left\| - N_{k+1}[n+ \sum_{j=1}^{n-1}\sum_{p=1}^{j}C_{j}^{p}(ik)^{j+1-n-p}i^{p-1} + \sum_{p=2}^{n}C_{n}^{p}(ik)^{1-p}i^{p-1}](ik)^{n}N_{k} + N_{k+1}k(L_{k} -  L_{k+1})N_{k}\right\|\\ 
&=\sup_{k \in \mathbb{Z}}\left\| - N_{k+1}[n+ \sum_{j=1}^{n-1}\sum_{p=1}^{j}C_{j}^{p}(ik)^{j+1-n-p}i^{p-1} + \sum_{p=2}^{n}C_{n}^{p}(ik)^{1-p}i^{p-1}]S_{k} + N_{k+1}k(L_{k} -  L_{k+1})N_{k}\right\|\\ 
&=\sup_{k \in \mathbb{Z}}\left\| -  N_{k+1} a_{k} S_{k}+ N_{k+1}k(L_{k} -  L_{k+1})N_{k}\right\|
\end{align*}
We obtain: 
\begin{equation}\label{b3}
 \sup_{k \in \mathbb{Z}}\| k (N_{k+1} - N_{k})\| < \infty 
\end{equation}
On the other hand,  we have 
\begin{align*}
&\sup_{k \in \mathbb{Z}}\| k (S_{k+1} - S_{k})\|=\sup_{k \in \mathbb{Z}}\left\|k[(i(k+1))^{n}N_{k+1} - (ik)^{n}N_{k}]\right\|\\
&=\sup_{k \in \mathbb{Z}}\left\|kN_{k+1}[(ik+i)^{n}(\sum_{j=1}^{n}(ik)^{j}I - A - L_{k}) - (ik)^{n}(\sum_{j=1}^{n}(i(k+1))^{j}I - A - L_{k+1})]N_{k}\right\|\\
&=\sup_{k \in \mathbb{Z}}\left\|kN_{k+1}[(ik+i)^{n}(\sum_{j=1}^{n}(ik)^{j}I - A - L_{k}) - (ik)^{n}(\sum_{j=1}^{n}(ik+i)^{j}I - A - L_{k+1})]N_{k}\right\|\\
&=\sup_{k \in \mathbb{Z}}\left\|kN_{k+1}[(ik+i)^{n}\sum_{j=1}^{n}(ik)^{j}I- (ik)^{n}\sum_{j=1}^{n}(ik+i)^{j}I- (ik+i)^{n}( A + L_{k}) + (ik)^{n} ( A + L_{k+1})]N_{k}\right\|\\
&=\sup_{k \in \mathbb{Z}}\left\|kN_{k+1}[\sum_{p=0}^{n}C_{n}^{p}(ik)^{n-p}i^{p}\sum_{j=1}^{n}(ik)^{j}I- (ik)^{n}\sum_{j=1}^{n}\sum_{p=0}^{j}C_{j}^{p}(ik)^{j-p}i^{p}I- \sum_{p=0}^{n}C_{n}^{p}(ik)^{n-p}i^{p}( A + L_{k}) + (ik)^{n} ( A + L_{k+1})]N_{k}\right\|\\
&=\sup_{k \in \mathbb{Z}}\left\|kN_{k+1}[c_{k}- \sum_{p=1}^{n}C_{n}^{p}(ik)^{n-p}i^{p}( A + L_{k}) + (ik)^{n} (L_{k+1} - L_{k})]N_{k}\right\|\\
&=\sup_{k \in \mathbb{Z}}\left\|kN_{k+1}[(ik)^{2n}b_{k}- \sum_{p=1}^{n}C_{n}^{p}(ik)^{n-p}i^{p}( A + L_{k}) + (ik)^{n} (L_{k+1} - L_{k})]N_{k}\right\| \ (\text{by} \ (\ref{c}) )\\
&=\sup_{k \in \mathbb{Z}}\left\|kN_{k+1}(ik)^{2n}b_{k}N_{k} - kN_{k+1}\sum_{p=1}^{n}C_{n}^{p}(ik)^{n-p}i^{p}( A + L_{k})N_{k} + N_{k+1}k(L_{k+1} - L_{k})S_{k}\right\|\\
&=\sup_{k \in \mathbb{Z}}\left\|kN_{k+1}(ik)^{n}b_{k}S_{k} - kN_{k+1}\sum_{p=1}^{n}C_{n}^{p}(ik)^{-p}i^{p}( A + L_{k})(ik)^{n}N_{k} + N_{k+1}k(L_{k+1} - L_{k})S_{k}\right\|\\
&=\sup_{k \in \mathbb{Z}}\left\|(\frac{ik}{ik+i})^{n}S_{k+1}kb_{k}S_{k} - \frac{k}{(ik+i)^{n}}S_{k+1}\sum_{p=1}^{n}C_{n}^{p}(ik)^{-p}i^{p}( A + L_{k})S_{k} + N_{k+1}k(L_{k+1} - L_{k})S_{k}\right\|\\
\end{align*}
Then  
\begin{center}$\sup_{k \in \mathbb{Z}}\| k (S_{k+1} - S_{k})\| < \infty$ \end{center}
Finally we have 
\begin{align*}
\sup_{k \in \mathbb{Z}}\left\|k (T_{k+1} - T_{k})\right\|&=\sup_{k \in \mathbb{Z}}\left\|(k[ L_{k+1}N_{k+1} - L_{k}N_{k}]\right\| \\
&=\sup_{k \in \mathbb{Z}}\left\|k( L_{k+1}- L_{k})N_{k+1} +  L_{k}k(N_{k+1}-N_{k}) \right\|\\
&\leq \sup_{k \in \mathbb{Z}}\left\|k( L_{k+1}- L_{k})N_{k+1}\right\| + \sup_{k \in \mathbb{Z}}\left\|L_{k}k(N_{k+1}-N_{k})\right\|\\
&\leq \sup_{k \in \mathbb{Z}}\left\|k( L_{k+1}- L_{k})N_{k+1}\right\| +  \sup_{k \in \mathbb{Z}}\left\|L_{k}\right\|\sup_{k \in \mathbb{Z}}\left\|k(N_{k+1}-N_{k})\right\|
\end{align*}
Then by (\ref{b3}) we have
\begin{center}$\sup_{k \in \mathbb{Z}}\| k (T_{k+1} - T_{k})\| < \infty$ \end{center}
so, $(N_{k})_{k \in \mathbb{Z}}, (S_{k})_{k \in \mathbb{Z}}$ and  $(T_{k})_{k \in \mathbb{Z}}$  are M-bounded. 
\end{proof}

\begin{thm}
Let $1 \leq p,q < \infty$ and $s > 0$. Let X be a Banach space having Fourier type $r \in ]1, 2]$ and A be a linear closed and bounded operator. If  Suppose that $(\sum_{j=1}^{n}(ik)^{j}I - A - L_{k})$ is bounded invertible and $(ik)^{n}(\sum_{j=1}^{n}(ik)^{j}I - A - L_{k})^{-1}$ is bounded. Then for every $f \in B^{s}_{p,q}(\mathbb{T}, X)$ there exist a unique strong $B^{s}_{p,q}$-solution of ({\bf \color{red}{\ref{e1}}}).
\end{thm}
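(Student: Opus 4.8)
The plan is to reduce equation (\ref{e1}) to an operator-valued Fourier multiplier problem and then invoke Lemma \ref{l1} together with Theorem \ref{t41}. First I would suppose $x \in B^{s}_{p,q}(\mathbb{T};X)$ is a strong solution and apply the Fourier transform to both sides of (\ref{e1}). Using $\widehat{x^{(j)}}(k) = (ik)^{j}\hat{x}(k)$, the boundedness of $A$ (so that $\widehat{Ax}(k) = A\hat{x}(k)$), and the Lemma of \cite{7} giving $\widehat{L(x_{.})}(k) = L_{k}\hat{x}(k)$, the equation becomes, for every $k \in \mathbb{Z}$,
$$\left(\sum_{j=1}^{n}(ik)^{j}I - A - L_{k}\right)\hat{x}(k) = \hat{f}(k).$$
By the invertibility hypothesis this is equivalent to $\hat{x}(k) = N_{k}\hat{f}(k)$, so a strong solution, if it exists, is uniquely determined by its Fourier coefficients and hence unique; taking $f = 0$ forces $\hat{x}(k) = 0$ for all $k$ and therefore $x = 0$.

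For existence I would run this equivalence backwards. By Lemma \ref{l1} the sequence $\{N_{k}\}_{k \in \mathbb{Z}}$ is M-bounded, and since $X$ has Fourier type $r \in\ ]1,2]$, Theorem \ref{t41} shows that $\{N_{k}\}$ is a $B^{s}_{p,q}$-multiplier. Hence there is a unique $x \in B^{s}_{p,q}(\mathbb{T};X)$ with $\hat{x}(k) = N_{k}\hat{f}(k)$ for all $k$, and this $x$ is the candidate solution.

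It remains to check the regularity required by the definition of strong solution. Applying the same multiplier theorem to the M-bounded sequence $\{S_{k} = (ik)^{n}N_{k}\}$ from Lemma \ref{l1}, the function whose Fourier coefficients are $(ik)^{n}\hat{x}(k) = S_{k}\hat{f}(k)$ lies in $B^{s}_{p,q}(\mathbb{T};X)$; by the Proposition (parts 2 and 3) this means $x^{(n)} \in B^{s}_{p,q}$, i.e. $x \in B^{s+n}_{p,q}(\mathbb{T};X)$, and the embedding $B^{s+n}_{p,q} \subseteq B^{s+j}_{p,q}$ for $1 \le j \le n$ gives $x \in B^{s+j}_{p,q}(\mathbb{T};X)$ for every $j \in \{1,\dots,n\}$. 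Since $A$ is bounded, $D(A) = X$ and $x(t) \in D(A)$ automatically. Finally, using the M-bounded sequence $\{T_{k} = L_{k}N_{k}\}$ (again a multiplier by Theorem \ref{t41}) to identify the Fourier coefficients of $t \mapsto L(x_{t})$ as $T_{k}\hat{f}(k) = L_{k}\hat{x}(k)$, one reads off that the relation $(\sum_{j=1}^{n}(ik)^{j}I - A - L_{k})\hat{x}(k) = \hat{f}(k)$ built into the construction is exactly the transformed form of (\ref{e1}); by uniqueness of Fourier coefficients the equation holds a.e., so $x$ is a strong $B^{s}_{p,q}$-solution.

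The technical heart of the argument is already contained in Lemma \ref{l1}, which establishes the M-boundedness of $\{N_{k}\}$, $\{S_{k}\}$ and $\{T_{k}\}$; given that, the present theorem is essentially a packaging of the multiplier theorem. The only point requiring mild care is the regularity bookkeeping -- converting the multiplier statement for $S_{k}$ into membership of $x$ in the higher Besov spaces $B^{s+j}_{p,q}$, and confirming that the delay term $L(x_{.})$ is itself a $B^{s}_{p,q}$ function so that the identity may be verified coefficient by coefficient. I expect no genuine obstacle beyond this, chiefly because the assumption that $A$ is bounded removes all domain issues that would otherwise attend the closed-operator term $Ax$.
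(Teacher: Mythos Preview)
Your proof is correct and follows the same route as the paper's: invoke Lemma \ref{l1} and Theorem \ref{t41} to turn $\{N_k\}$, $\{S_k\}$, $\{T_k\}$ into $B^{s}_{p,q}$-multipliers, define the candidate solution by $\hat{x}(k)=N_k\hat{f}(k)$, read off the regularity and the equation from the Fourier-coefficient identities, and get uniqueness from injectivity of the symbol. The only cosmetic difference is that the paper packages the derivatives via the auxiliary multiplier $P_k=\sum_{j=1}^{n}(ik)^{j}N_k=(1+\sum_{j=1}^{n-1}(ik)^{j-n})S_k$ and deduces that $AN_k$ is a multiplier from the identity $P_k-AN_k-L_kN_k=I$, whereas you reach the same conclusions through $S_k$ plus the embedding $B^{s+n}_{p,q}\subseteq B^{s+j}_{p,q}$ and the boundedness of $A$.
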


\begin{proof} Define $P_{k} = \sum_{j=1}^{n}(ik)^{j}N_{k} = (1 + \sum_{j=1}^{n-1}(ik)^{j-n})S_{k}$ for $k \in \mathbb{Z}$. Since by  Lemma(\ref{l1}), $(N_{k})_{k \in \mathbb{Z}}, (S_{k})_{k \in \mathbb{Z}}, (T_{k})_{k \in \mathbb{Z}}$ and $(P_{k})_{k \in \mathbb{Z}}$ are M-bounded, we have  by Theorem \ref{t41} that  $(N_{k})_{k \in \mathbb{Z}}, (P_{k})_{k \in \mathbb{Z}}$ and $(T_{k})_{k \in \mathbb{Z}}$ are an $B_{p,q}^{s}$-multipliers. Since $P_{k} -  AN_{k} - L_{k}N_{k} = I$ (because   $( (\sum_{j=1}^{n}(ik)^{j}I -A- L_{k})N_{k} = I),$ we deduce $ AN_{k}$ is also an $B^{s}_{p,q}$-multiplicateur.\\
Now let $f \in B^{s}_{p,q}(\mathbb{T}, X)$. Then there exist $u, v, w, x \in  B^{s}_{p,q}(\mathbb{T}, X)$, such that\\
$\hat{u}(k) = N_{k}\hat{f}(k),  \hat{v}(k) = P_{k}\hat{f}(k), \hat{w}(k) = AN_{k}\hat{f}(k)$ and $\hat{x}(k) = T_{k}\hat{f}(k)$ for all $k \in \mathbb{Z}$. So,
We have $\hat{u}(k) \in D(A)$ and $A\hat{u}(k)= \hat{w}(k)$ for all $k \in \mathbb{Z}$, we deduce that  $u(t) \in D(A)$. On the other hand $\exists v \in  B^{s}_{p,q}(\mathbb{T}, X)$ such that $\hat{v}(k) = P_{k}\hat{f}(k)= \sum_{j=1}^{n}(ik)^{j}N_{k}\hat{f}(k)=\sum_{j=1}^{n}(ik)^{j} \hat{u}(k)$.  Then we obtain $\sum_{j=1}^{n}\frac{d^{j}}{dt^{j}}u(t) = v(t)$ a.e. Since  $u(t) \in B^{s+j}_{p,q}(\mathbb{T}, X), \ j \in \{1,...n\}$.\\
We have $\widehat{ \sum_{j=1}^{n}\frac{d^{j}}{dt^{j}}u}(k) = \sum_{j=1}^{n}(ik)^{j}\hat{u}(k)$ and $\widehat{Lu_{.}}(k)= L_{k}\hat{u}(k)$ for all $k \in \mathbb{Z}$, It follows from the identity 
$$\sum_{j=1}^{n}(ik)^{j}N_{k} - A N_{k} - L_{k}N_{k}= I$$ that
\begin{center}
$\sum_{j=1}^{n}\frac{d^{j}}{dt^{j}}u(t) = Au(t) + L(u_{t}) + f(t)$
\end{center}
For the uniqueness we suppose two solutions $u_{1}$ and $u_{2}$, then  $u = u_{1} - u_{2}$ is  strong $L^{p}$-solution of equation (\ref{e1}) corresponding to the function $f = 0$, taking Fourier transform, we get $(\sum_{j=1}^{n}(ik)^{j}I  - A - L_{k})\hat{u}(k) = 0$, which implies that $\hat{u}(k) = 0$ for all $k \in \mathbb{Z}$ and  $u(t) = 0$. Then $u_{1} = u_{2}$. The proof is completed.
\end{proof}


\section{Conclusion}
We are obtained necessary and sufficient conditions to guarantee existence and uniqueness of periodic solutions to the equation $\sum_{j=1}^{n}\frac{d^{j}}{dt^{j}}u(t) = Au(t) + L(x_{t}) + f(t)$ in terms of either the M-boundedness of the modified resolvent operator determined by the equation. Our results are obtained in the  Besov space.

\scriptsize\----------------------------------------------------------------------------------------------------------------------------------------\\\copyright
{\it{Copyright International Knowledge Press. All rights reserved.}}

\end{document}